\theoremstyle{plain}
\newtheorem{thm}{Theorem}[section]
\newtheorem{lem}[thm]{Lemma}
\newtheorem{cor}[thm]{Corollary}
\newtheorem{Def}[thm]{Definition}
\newtheorem{rem}[thm]{Remark}
\newcommand{\Rem}{\begin{rem} \rm}
\newcommand{\bdfn}{\begin{Def} \rm}
\newcommand{\edfn}{\end{Def}}
\begin{document}
\title{inclusion of  $\Lambda BV^{(p)}$  spaces in the classes $H_{\omega}^{q}$ }
\author[M. Hormozi]{Mahdi Hormozi}
\address[Mahdi Hormozi ]
{ Department of Mathematical Sciences, Division of Mathematics\\ Chalmers University
of Technology and University of Gothenburg\\ Gothenburg 41296, Sweden}
\email[Mahdi Hormozi]{hormozi@chalmers.se}
 \keywords{generalized bounded variation, modulus of variation, Banach space}
\subjclass[2000] {Primary  26A15; Secondary 26A45}
\date{\today}
\begin{abstract}
A characterization of the inclusion of Waterman-Shiba classes into
classes of functions with given integral modulus of continuity is given. This corrects and extends an earlier result of a paper from 2005.
\end{abstract}

\maketitle

\section{Preliminaries}
Let $\:\Lambda=(\lambda_i)\:$ be a nondecreasing sequence of positive numbers such that
$\:\sum\frac1{\lambda_i}=+\infty\:$ and let $p$ be a number greater than or equal to 1. A function $\:f:[a,\,b]\to\mathbb R\:$ is said to be of bounded $p$-$\Lambda$-variation
on a not necessarily closed subinterval $\:P\subset[a,\,b]\:$ if
$$
 V(f)\ :=\ \sup\left(\sum_{i=1}^n\frac{|f(I_i)|^p}{\lambda_i}
\right)^\frac1p\ <\ +\infty,
$$
where the supremum is taken over all finite families $\:\{I_i\}_{i=1}^n\:$ of
nonoverlapping subintervals of $P$ and where $\:f(I_i):=f(\sup I_i)-f(\inf I_i)\:$
is the change of the function $f$ over the interval $I_i$. The symbol $\:\Lambda BV^{(p)}\:$ denotes
the linear space of all functions of bounded $p$-$\Lambda$-variation with domain $[0,\,1]$. The Waterman-Shiba class
$\:\Lambda BV^{(p)}\:$ was introduced in 1980 by M. Shiba in \cite{S}. When $p=1$, $\:\Lambda BV^{(p)}\:$ is the well-known Waterman class $\:\Lambda BV$. Some of the basic
properties of functions of class $\:\Lambda BV^{(p)}\:$ were discussed by R. G. Vyas
in \cite{V3} recently. More results concerned with the Waterman-Shiba classes and their
applications can be found in \cite{BT}, \cite{BTV},  \cite{HLP}, \cite{L}, \cite{SW1}, \cite{SW}, \cite{V1}
and \cite{V2}. $\Lambda BV^{(p)}\:$ equipped with the norm $\:\|f\|_{\Lambda,\,p}:=
|f(0)|+V(f)\:$ is a Banach space.

Functions in a Waterman-Shiba class $\Lambda BV^{(p)}$ are
regulated \cite[Thm. 2]{V2}, hence integrable, and thus it makes sense to consider their integral modulus of continuity
$$
\omega_q(\delta,\,f)\ :=\ \sup_{0\le \gamma \le\delta}\,\left( \int_0^{1-\gamma}|f(t+\gamma)-f(t)|^q\ \right)^\frac1q dt,
$$
for $\:0\le\delta\le1$. However, if $f$ is defined on $\:\mathbb R\:$ instead of on $[0,\,1]$
 and if $f$ is 1-periodic, it is convenient to modify the definition and put
 $$
 \omega_q(\delta,\,f)\ :=\ \sup_{0\le \gamma \le\delta}\,\left(\int_0^1|f(t+\gamma)-f(t)|^q\ \right)^\frac1q dt,
 $$
 since the difference between the two definitions is then nonessential in all applications
 of the concept. A function $\:\omega:\,[0,\,1]\to\mathbb R\:$ is said to be a modulus of continuity if it is
 nondecreasing, continuous and $\:\omega(0)=0$. If $\omega$ is a modulus of continuity,
 then $H_{\omega}^{q}$ denotes the class of functions $\:f\in L^q[0,\,1]\:$ for which $\:\omega_q(
 \delta,\,f)=\text{O}(\omega(\delta))\:$ as $\:\delta\to0+$.\\

 In \cite{HLP}, a necessary and sufficient condition for the inclusion $\Lambda BV^{(p)}$ in $H_{\omega}^{1}$, is given. Also, Wang \cite{Wg} by using an interesting method found a necessary and sufficient condition for the embedding $\: H_{\omega}^{q} \subset \Lambda BV$. Here, we give a necessary and sufficient condition for the inclusion of $\Lambda BV^{(p)}$ in $H_{\omega}^{q}$.

 \section{main result}
 In  \cite {G2}, it was claimed that the following is true.
 \begin{thm}
   \label{t1}
   For $q \in [1 , \infty)$, the inclusion $\Lambda BV \subset H_{q}^{\omega}$ holds if and only if
   \begin{equation}
    \label{e1}
   \limsup_{n\rightarrow\infty} \frac{1}{\omega (1/n) n^{\frac{1}{q}}}\ \max_{1\leq k\leq n} \ \frac{k^{\frac{1}{q}}}{(\sum_{i=1}^{k} \frac{1}{\lambda_i})}< +\infty.
   \end{equation}
   \end{thm}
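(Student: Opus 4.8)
The plan is to treat the two implications separately, after reducing the inclusion to a \emph{bounded} one. Since $\Lambda BV$ ($=\Lambda BV^{(1)}$) is a Banach space and $H^{q}_{\omega}$, normed by $\|f\|_{L^{q}}+\sup_{0<\delta\le1}\omega_{q}(\delta,f)/\omega(\delta)$, is also a Banach space — completeness follows because for each fixed $\delta$ the seminorm $f\mapsto\omega_{q}(\delta,f)$ is continuous on $L^{q}$ (indeed $\omega_{q}(\delta,f)\le2\|f\|_{L^{q}}$) — the inclusion map $\Lambda BV\hookrightarrow H^{q}_{\omega}$, if it exists, has closed graph: convergence in $\Lambda BV$ forces pointwise convergence, and convergence in $H^{q}_{\omega}\subset L^{q}$ forces a.e.\ convergence along a subsequence. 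So by the closed graph theorem $\Lambda BV\subset H^{q}_{\omega}$ is equivalent to the existence of a constant $C$ with $\omega_{q}(\delta,f)\le C\|f\|_{\Lambda,1}\,\omega(\delta)$ for all $f$ and all $\delta$. Below I abbreviate $s_{k}:=\sum_{i=1}^{k}\lambda_{i}^{-1}$.

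For the sufficiency of \eqref{e1} I would fix $f$ with $f(0)=0$, $V(f)\le1$ (the general case follows by homogeneity), fix a small $\delta$ and a shift $\gamma\in(0,\delta]$, and set $N:=\lceil1/\gamma\rceil$. Writing $t=h+j\gamma$ with $h\in[0,\gamma)$ and $j\in\mathbb{Z}_{\ge0}$, one gets
\[
\int_{0}^{1-\gamma}|f(t+\gamma)-f(t)|^{q}\,dt=\int_{0}^{\gamma}\Big(\sum_{j}|f(I^{h}_{j})|^{q}\Big)dh,
\]
where for each $h$ the intervals $I^{h}_{j}:=[h+j\gamma,h+(j+1)\gamma]\subset[0,1]$ are nonoverlapping and number at most $N$. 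For fixed $h$, let $b_{1}\ge b_{2}\ge\cdots$ be the nonincreasing rearrangement of the $|f(I^{h}_{j})|$; the definition of $V$ gives $\sum_{i}b_{i}/\lambda_{i}\le V(f)\le1$. The key elementary input is that the convex function $\sum_{i=1}^{N}b_{i}^{q}$ is maximized over the polytope $\{b_{1}\ge\cdots\ge b_{N}\ge0,\ \sum_{i}b_{i}/\lambda_{i}\le1\}$ at a vertex $b_{1}=\cdots=b_{k}=1/s_{k}$, $b_{k+1}=\cdots=b_{N}=0$, so that $\sum_{i}b_{i}^{q}\le\max_{1\le k\le N}k\,s_{k}^{-q}$. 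Integrating in $h$ and taking $q$-th roots,
\[
\Big(\int_{0}^{1-\gamma}|f(t+\gamma)-f(t)|^{q}\,dt\Big)^{1/q}\le\gamma^{1/q}\max_{1\le k\le N}\frac{k^{1/q}}{s_{k}}.
\]
Now apply \eqref{e1} with $n=N$ (legitimate since $N\ge1/\gamma\ge1/\delta$ is large): the right side is at most $C\gamma^{1/q}\omega(1/N)N^{1/q}=C(\gamma N)^{1/q}\omega(1/N)\le2^{1/q}C\,\omega(\delta)$, using $\gamma N\le1+\gamma\le2$ and $1/N\le\gamma\le\delta$. This bound is uniform in $\gamma\le\delta$, so $\omega_{q}(\delta,f)\le2^{1/q}C\,\omega(\delta)$ for all small $\delta$, i.e.\ $f\in H^{q}_{\omega}$.

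For necessity I would test the bounded inclusion against sawtooth functions. For $n$ and $k\le n$, let $f_{n,k}$ be supported on $[0,k/n]$, rising linearly from $0$ to $A:=1/s_{k}$ on each $[j/n,j/n+1/(2n)]$ and back to $0$ on each $[j/n+1/(2n),(j+1)/n]$, $j=0,\dots,k-1$; its $2k$ monotone pieces have increments all equal to $A$, so $\|f_{n,k}\|_{\Lambda,1}=A\,s_{2k}\le2A\,s_{k}=2$. With shift $\gamma=1/(2n)$, a direct computation on the ascending half-periods gives $\omega_{q}(1/(2n),f_{n,k})\gtrsim n^{-1/q}k^{1/q}/s_{k}$, with a constant depending only on $q$. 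The bounded inclusion then forces $n^{-1/q}k^{1/q}/s_{k}\lesssim\omega(1/(2n))$ for all $k\le n$; setting $N=2n$ and controlling the leftover range $N/2<k\le N$ by comparison with $k=\lfloor N/2\rfloor$ upgrades this to $\max_{1\le k\le N}k^{1/q}/s_{k}\lesssim N^{1/q}\omega(1/N)$ for every $N$, which is exactly \eqref{e1}.

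The step I expect to be the real obstacle, and where an earlier treatment could go wrong, is matching the two ranges of the maximum in the sufficiency estimate: the bound attached to a shift $\gamma$ inevitably involves $\max_{1\le k\le N}$ with $N\approx1/\gamma$, which for $\gamma\ll1/n$ reaches well beyond the range $k\le n$ controlled by \eqref{e1}. The resolution — apply \eqref{e1} at the scale $N$ itself and absorb the discrepancy through $\gamma N\le2$ and $\omega(1/N)\le\omega(\delta)$ — together with the extremal (linear-programming) lemma that produces the factor $\max_{k}k^{1/q}/s_{k}$ in the first place, is the technical heart of the argument; everything else is bookkeeping (including, on the necessity side, remembering to test against every $k\le n$ rather than just $k=n$).
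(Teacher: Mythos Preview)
Your argument is correct. For sufficiency you follow essentially the paper's route: decompose $[0,1]$ into $\gamma$-translates and invoke Kuprikov's extremal lemma (your vertex description of the polytope $\{b_{1}\ge\cdots\ge b_{N}\ge0,\ \sum_{i}b_{i}/\lambda_{i}\le1\}$) to bound $\sum_{j}|f(I^{h}_{j})|^{q}$ by $\max_{k\le N}k\,s_{k}^{-q}$; this is exactly the paper's Lemma~\ref{l1} and Corollary~\ref{c1}.

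Your necessity argument, however, is genuinely different. The paper never invokes the closed graph theorem. Instead, assuming \eqref{e1} fails, it selects sequences $(n_{k})$, $(m_{k})$ along which the quotient blows up and builds a \emph{single} function $g=\sum_{k}g_{k}$, where each $g_{k}$ is a step function supported in the dyadic block $[2^{-k},2^{-k+1}]$ with $2N_{k}$ jumps of size $2^{-k}/s_{m_{k}}$. The disjoint supports give $\|g\|_{\Lambda,1}\le\sum_{k}\|g_{k}\|_{\Lambda,1}<\infty$, while $\omega_{q}(1/n_{k},g)/\omega(1/n_{k})\to\infty$. This construction is precisely where Goginava's original proof in \cite{G2} went wrong (the subsequence bookkeeping failed), and the paper's contribution is a corrected version of it. Your approach sidesteps that difficulty entirely: once the closed graph theorem produces a \emph{bounded} inclusion, testing against the individual sawtooths $f_{n,k}$ suffices, with no gluing. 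The cost is that you must check the Banach-space structure of $H^{q}_{\omega}$ (in particular $\omega(\delta)>0$ for $\delta>0$); the gain is a shorter, more robust argument.

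Two small remarks. Your ``leftover range'' step is unnecessary: taking $n=N$ (rather than $N=2n$) and using $\omega(1/(2N))\le\omega(1/N)$ already gives $\max_{k\le N}k^{1/q}/s_{k}\lesssim N^{1/q}\omega(1/N)$ for every $N$. And your closing paragraph locates the obstacle in the sufficiency direction, but the sufficiency was never in dispute (it is Kuprikov's 1997 estimate); the error in \cite{G2}, and the whole point of the present paper, lies in the necessity.
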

The proof of sufficiency of the condition came up immediately from \cite{kup} and so, the main part of  \cite{G2} concerns the proof of necessity. The theorem itself is correct but, unfortunately, there is a major mistake regarding the existence of some subsequences, which ensures that the proof of \cite{G2} is incorrect. To understand this, take $\omega(x)=x^{1/p}$. Then we can choose $\gamma_k=2^k, \gamma_k'=n_k'=k$. If we take $s_k'=k$, then the condition for case
(a) in \cite{G2} is satisfied, since $m(x)\leq 2^x$ by definition. But for subsequences $r_k$ of $s'_k$, relation (3) in \cite{G2}, $\omega(\frac{1}{2^{r_k}})\cdot 2^{r_k/p} \leq 4^{-k}$, is not true.\\

Our main result provides a characterization of the embedding of a generalization of $\Lambda BV$, Waterman-Shiba classes, into classes of functions with given integral modulus of continuity. Thus, by considering $p=1$, the correctness of \cite[Thm. 1]{G2} can be verified.
   \begin{thm}
   \label{t2}
   For $p,q \in [1 , \infty)$, the inclusion $\Lambda BV^{(p)}\subset H_{q}^{\omega}$ holds if and only if
   \begin{equation}
    \label{e2}
   \limsup_{n\rightarrow\infty}  \left \{\frac{1}{\omega (1/n) n^{\frac{1}{q}}}\ \max_{1\leq k\leq n} \ \frac{k^{\frac{1}{q}}}{(\sum_{i=1}^{k} \frac{1}{\lambda_i})^{\frac{1}{p}}} \right \}< +\infty.
   \end{equation}
   \end{thm}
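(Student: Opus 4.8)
\emph{Plan of proof.} The starting point is the standard reduction: since $\Lambda BV^{(p)}$ is a Banach space, every $f\in\Lambda BV^{(p)}$ is bounded (indeed $\|f\|_\infty\le|f(0)|+\lambda_1^{1/p}V(f)$, so $\Lambda BV^{(p)}\hookrightarrow L^q[0,1]$ continuously), and $H^q_\omega$ is a Banach space that embeds continuously into $L^q[0,1]$, the closed graph theorem shows that $\Lambda BV^{(p)}\subset H^q_\omega$ if and only if there is a constant $C$ with
\[
\omega_q(\delta,f)\le C\,\omega(\delta)\,\|f\|_{\Lambda,p}\qquad(0<\delta\le1,\ f\in\Lambda BV^{(p)}).
\]
It is exactly the uniformity of this constant, and the handling of the attendant subsequences, that was mishandled in \cite{G2}, so this reduction must be carried out carefully. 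Throughout write $s_k:=\sum_{i=1}^k 1/\lambda_i$, so that \eqref{e2} reads $\limsup_n\big(\omega(1/n)\,n^{1/q}\big)^{-1}\max_{k\le n}k^{1/q}s_k^{-1/p}<\infty$.

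\emph{Sufficiency.} The core is the pointwise-in-$\gamma$ estimate
\[
\int_0^{1-\gamma}|f(t+\gamma)-f(t)|^q\,dt\ \le\ V(f)^q\,\gamma\max_{1\le k\le \lceil1/\gamma\rceil}\frac{k}{s_k^{\,q/p}}\qquad(0<\gamma<1).
\]
To prove it, fix $\gamma$, set $n:=\lceil1/\gamma\rceil$, and change variables $t=s+j\gamma$ with $s\in[0,\gamma)$ to rewrite the left side as $\int_0^\gamma\big(\sum_{j}|f(s+(j+1)\gamma)-f(s+j\gamma)|^q\big)\,ds$, where for each $s$ the intervals $[s+j\gamma,s+(j+1)\gamma]$ are at most $n$ in number and pairwise nonoverlapping. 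If $a_1(s)\ge a_2(s)\ge\cdots$ is the nonincreasing rearrangement of the numbers $|f(s+(j+1)\gamma)-f(s+j\gamma)|$, then (since $\lambda_i$ is nondecreasing, the worst reindexing pairs the largest increments with the smallest $\lambda_i$) the definition of $p$-$\Lambda$-variation gives $\sum_i a_i(s)^p/\lambda_i\le V(f)^p$ for every $s$. It therefore suffices to know: \emph{if $a_1\ge\cdots\ge a_N\ge0$ and $\sum_{i=1}^N a_i^p/\lambda_i\le1$, then $\sum_{i=1}^N a_i^q\le\max_{1\le m\le N}m\,s_m^{-q/p}$.} The crude estimate $a_i\le s_i^{-1/p}$ followed by term-by-term summation loses a factor $\log N$ and is not enough; one must use the full constraint. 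One clean way: the constraint set is compact and convex, $\sum a_i^q$ is convex, hence its maximum is attained at an extreme point, and the extreme points are exactly the vectors $(c,\dots,c,0,\dots,0)$ with $m$ leading entries $c=s_m^{-1/p}$, on which the objective equals $m\,s_m^{-q/p}$. (For $p=1$ this is the estimate underlying the sufficiency proof of \cite{G2} via \cite{kup}; the passage to general $p$ only replaces $s_k$ by $s_k^{1/p}$.) Integrating the bound over $s\in[0,\gamma)$ yields the displayed estimate. Finally, if \eqref{e2} holds there is $C$ with $\max_{k\le n}k\,s_k^{-q/p}\le C^q\,n\,\omega(1/n)^q$ for large $n$; taking $n=\lceil1/\gamma\rceil$, using $\gamma n\le2$ and $\omega(1/n)\le\omega(\gamma)$, gives $\int_0^{1-\gamma}|f(t+\gamma)-f(t)|^q\,dt\le 2C^qV(f)^q\omega(\gamma)^q$, hence $\omega_q(\delta,f)=O(\omega(\delta))$ and $f\in H^q_\omega$.

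\emph{Necessity.} Assume the embedding, hence the displayed uniform inequality. Fix $n$ and $k$ with $3k\le n$, and let $f_{n,k}$ be the function on $[0,1]$ that on the $i$-th block $[(i-1)/k,i/k)$ equals $1$ on $[(i-1)/k,(i-1)/k+1/n)$ and $0$ elsewhere, $i=1,\dots,k$. It has exactly $2k$ jumps of height $1$ over nonoverlapping intervals and is constant in between, and no interval picks up a net change of more than $1$; hence $V(f_{n,k})=s_{2k}^{1/p}$, and since $s_k\le s_{2k}\le 2s_k$ we get $\|f_{n,k}\|_{\Lambda,p}\le 1+2^{1/p}s_k^{1/p}\le C_1 s_k^{1/p}$. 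On the other hand, for the shift $\gamma=1/n$ the difference $|f_{n,k}(t+1/n)-f_{n,k}(t)|$ equals $1$ on a set of measure $1/n$ inside each block (the hypothesis $3k\le n$ keeps consecutive bumps from interfering), so $\omega_q(1/n,f_{n,k})\ge(k/n)^{1/q}$. Feeding this into the uniform inequality gives $(k/n)^{1/q}\le C C_1\,\omega(1/n)\,s_k^{1/p}$, i.e. $k^{1/q}s_k^{-1/p}\le C_2\,n^{1/q}\omega(1/n)$ for all $k\le n/3$. For $n/3<k\le n$ one has $k^{1/q}s_k^{-1/p}\le C_3\,\lfloor n/3\rfloor^{1/q}s_{\lfloor n/3\rfloor}^{-1/p}$, because $s$ is nondecreasing and $k\le 6\lfloor n/3\rfloor$; therefore $\max_{k\le n}k^{1/q}s_k^{-1/p}\le C_4\,n^{1/q}\omega(1/n)$, which is precisely \eqref{e2}. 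Specializing to $p=1$ recovers the statement of \cite[Thm.~1]{G2}.

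\emph{Expected main obstacle.} Two points carry the weight. First, in the sufficiency part, the \emph{sharp} inner inequality, with $\max_m m\,s_m^{-q/p}$ rather than $\sum_m s_m^{-q/p}$: only the sharp form makes \eqref{e2}, and not a strictly stronger condition, sufficient, and obtaining it requires exploiting the variation constraint itself rather than the pointwise bounds it yields. Second, in the necessity part, securing the uniform embedding constant and then reducing $\max_{k\le n}$ to $\max_{k\le n/3}$ so that the explicit test functions are actually available at every relevant scale — this is the step at which the reasoning of \cite{G2} failed.
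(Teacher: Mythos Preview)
Your strategy---split the $L^q$ integral into $n$ congruent pieces and bound the inner sum by the sharp maximum $\max_{m\le n}m\,s_m^{-q/p}$---is exactly the paper's. However, the justification of that inner bound is flawed as written. You claim that the extreme points of $\{a:a_1\ge\cdots\ge a_N\ge0,\ \sum a_i^p/\lambda_i\le1\}$ are precisely the step vectors $(c,\dots,c,0,\dots,0)$. For $p>1$ this is false: the boundary $\sum a_i^p/\lambda_i=1$ is strictly convex, so every point of that curved face lying inside the order cone is itself extreme. The correct route (the one the paper takes, following Kuprikov) is to substitute $x_i=a_i^p$; the constraint then becomes the \emph{linear} inequality $\sum x_i/\lambda_i\le1$, the feasible set is a genuine polytope whose vertices are the step vectors, and the objective becomes $\sum x_i^{q/p}$. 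Your convexity/extreme-point argument now works when $q\ge p$, but when $q<p$ the objective is concave and one needs a separate argument (the paper's Lemma~2.5, via Jensen: $\sum x_i^{q/p}\le N^{1-q/p}(\sum x_i)^{q/p}\le N\,s_N^{-q/p}$). So the inequality you state is right, but its proof needs these two repairs.

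\textbf{Necessity.} Here your approach is genuinely different from the paper's, and it is correct. The paper argues by contraposition: assuming \eqref{e2} fails along a sequence $(n_k,m_k)$, it builds a single function $g=\sum_k g_k$ with the $g_k$ supported on disjoint dyadic blocks $[2^{-k},2^{-k+1}]$, checks $g\in\Lambda BV^{(p)}$ directly, and then shows $\omega_q(1/n_k,g)/\omega(1/n_k)\to\infty$. You instead invoke the closed graph theorem to upgrade the set inclusion to a uniform estimate $\omega_q(\delta,f)\le C\,\omega(\delta)\,\|f\|_{\Lambda,p}$, and then probe this with the single-scale comb functions $f_{n,k}$ to read off \eqref{e2}. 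Your reduction from $k\le n$ to $k\le n/3$ via monotonicity of $s_k$ is clean and avoids exactly the subsequence pitfall that sank \cite{G2}. The trade-off: the paper's construction is entirely elementary, while your argument leans on $H_\omega^q$ being a Banach space continuously embedded in $L^q$ (true for $\omega\not\equiv0$, but you should state the norm). Either method is sound; yours is shorter, the paper's is more self-contained.
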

\begin{proof}
To observe that equation (2) is a \textbf{sufficiency} condition for the inclusion $\Lambda BV^{(p)}\subset H_{q}^{\omega}$, we prove  an inequality which gives us the sufficiency :
$$\omega (\frac{1}{n},f)_{q} \leq  ~V(f)\left \{\frac{1}{n}\ \max_{1\leq k\leq n}\ \frac{k}{(\sum_{i=1}^{k} 1/\lambda_{i})^{\frac{q}{p}}} \right\} ^{\frac{1}{q}}.$$
Kuprikov \cite{kup} obtained Lemma \ref{l1} and Corollary \ref{c1} where $q\geq1$.
 \begin{lem}
 \label{l1}
Let $q \geq 1 $ and suppose $F(x)=\sum_{i=1}^{n} x_{i} ^{q}$ takes its maximum value under the following conditions
 \begin{equation}
 \nonumber
 (\sum_{i=1}^{n}\frac{x_i}{\lambda_i}) \leq 1,
 \end {equation}
   \begin{equation}
   \nonumber
   x_1 \geq x_2 \geq x_3 \geq ... \geq x_n\geq 0,
 \end {equation}
  then, $x=(x_1,x_2,...,x_n)$ satisfy
  \begin{equation}
  \label{e4}
 x_1= x_2= ...= x_k=\frac{1}{\sum_{j=1}^{k} 1/\lambda_{j}}> x_{k+1}= x_{k+2}=...= x_n=0,
  \end {equation}
 for some $k$~$( 1\leq k\leq n)$.
  \end{lem}
  \begin{cor}
\label{c1}
The maximum value of $F(x)$, under the conditions of Lemma \ref{l1}, is
$\max_{1 \leq k \leq n}\ \frac{k}{(\sum_{j=1}^{k} \tfrac1{\lambda_{j}})^{q}}.$
\end{cor}
  \begin{lem}
  \label{l2}
Suppose $0 <q <1$ and the conditions of Lemma \ref{l1} hold, then
 $$
 \sum_{i=1}^{n}x^q_i  = \frac{n}{(\sum_{j=1}^{n} \tfrac1{\lambda_{j}})^{q}}= ~\max_{1 \leq k \leq n}\ \frac{k}{(\sum_{j=1}^{k} \tfrac1{\lambda_{j}})^{q}}.
 $$
\end{lem}
\begin{proof}H\"{o}lder inequality yields
$$
\sum_{i=1}^{n}x^q_i \leq n^{1-q}\sum_{i=1}^{n}x_i \leq \frac{n}{(\sum_{j=1}^{n} \tfrac1{\lambda_{i}})^{q}}.
$$
Thus $F(x)$ takes its maximum when $x_i=\frac{1}{\sum_{j=1}^{n} \tfrac1{\lambda_{j}}}$ for $1 \leq j\leq n$. 
\end{proof}
  
Now, we return to the proof of inequality:
\begin{align*}
  \omega_q(\tfrac1n,\,f)^{q}&=\ \ \sup_{0< h\le\frac1n}\,\int_0^1|f(x+h)-f(x)|^q\,dx\\[.1in]
    &=\ \sup_{0< h\le\frac1n}\, \int_0^\frac1n\,\sum_{k=1}^n |f(x+\tfrac{k-1}n+h)-f(x+\tfrac{k-1}n)|^q\,dx.
    \end{align*}
For $h\le\frac1n$ and fixed $x$, denote $x_k:=|f(x+\tfrac{k-1}n+h)-f(x+\tfrac{k-1}n)|^p$. We reorder $x_k$ such that \\
$$ x_1 \geq x_2 \geq ...\geq x_n \geq 0,\qquad \left(\sum_{k=1}^n  \frac{x_k}{\lambda _k} \right)^{\frac{1}{p}} \leq V(f).$$
Therefore, by replacing $q$ by $q/p$ in Lemma \ref{l1}, Lemma \ref{l2} and Corollary \ref{c1}, we get
\begin{align*}
  \omega_q(\tfrac1n,\,f)^{q}&=\  \sup_{0< h\le\frac1n}\, \int_0^\frac1n\, \sum_{k=1}^n x_k ^{\frac{q}{p}} \ dx\\[.1in]
  &\le\  \int_0^\frac1n\,V^q(f)\max_{1\leq k\leq n} \frac{k}{(\sum_{i=1}^{k} 1/\lambda_{i})^{\frac{q}{p}}} dx\\[.1in]
  &\overset{\text{}}{=}\ \frac{1}{n} V^q(f) \max_{1\leq k\leq n} \frac{k}{(\sum_{i=1}^{k} 1/\lambda_{i})^{\frac{q}{p}}}.
\end{align*}

  \textbf{ Necessity.} Suppose (\ref{e2}) doesn't hold, that is, there are sequences $n_k$ and $m_k$ such that
 \begin{equation}
\label{e5}
n_k \geq 2^{k+2},
\end{equation}
 \begin{equation}
\label{e7}
 m_k\leq n_k,
\end{equation}
\begin{equation}
\label{e6}
\omega(\tfrac1{n_k})(\frac{n_k}{m_k})^{\frac{1}{q}}\ (\sum_{i=1}^{m_k} \frac{1}{\lambda_i})^{\frac{1}{p}} < \frac{1}{2^{4k}},
\end{equation}
where
$$
\max_{1\leq \rho \leq n_k}\,\ \frac{\rho}{(\sum_{i=1}^{\rho} 1/\lambda_{i})^{\tfrac{1}{p}}} = \frac{m_k}{(\sum_{i=1}^{m_k} 1/\lambda_{i})^{\tfrac{1}{p}}}.
$$
Denote
\begin{equation}
\label{e100}
 \Phi_{k} :=  \frac{1}{\sum_{i=1}^{m_k} 1/\lambda_{i}}.
\end{equation}
Consider
$$ g_k(y):=\begin{cases}
 2^{-k}\Phi_{k}^{1/p} ~~~~~, ~~~~y\in [\tfrac1{2^k}+\frac{2j-2}{n_k},\tfrac1{2^k}+\frac{2j-1}{n_k}) ;~~~~~  1\leq j\leq N_k, \\[.1in]
0~~~~~\qquad\qquad \textmd{otherwise},\\
\end{cases}
$$
where
\begin{equation}
\label{e8}
s_k= \max \{j\in \mathbb{N} : 2j \leq \frac{n_k}{2^k}+1\},
\end{equation}
and
\begin{equation}
\label{e9}
N_k= \min \{m_k, s_k\}.
\end{equation}

Hence, applying the fact $2(s_k+1)\geq \frac{n_k}{2^k}+1$ and (\ref{e5}), we have
\begin{equation}
\label{e10}
 \frac{2s_k-1}{n_k} \geq 2^{-k-1}.
\end{equation}

The functions $g_k$ have disjoint support. Thus $g:=\sum_{k=1}^{\infty}g_k$ is a well-defined  function on $[0,\,1]$. Since
\begin{align*}
\|g\| &\leq \ \sum_{k=1}^{\infty}\|g_k\|\\[.1in]
&= \sum_{k=1}^{\infty}  \left(\sum_{j=1}^{2N_k} \frac{|2^{-k}\Phi^{1/p}_{k}|^p}{\lambda_j}\right)^{1/p} \\[.1in]
&\leq \sum_{k=1}^{\infty} 2^{-k} \left(2 \sum_{j=1}^{N_k} \frac{|\Phi_{k}|}{\lambda_j}\right)^{1/p} \\[.1in]
&\overset{\text{(\ref{e9})}}{\leq}\ \sum_{k=1}^{\infty} 2^{-k} \left(2 \sum_{j=1}^{m_k} \frac{|\Phi_{k}|}{\lambda_j}\right)^{1/p} \\[.1in]
&\overset{\text{(\ref{e100})}}{=} \ \sum_{k=1}^{\infty} 2^{-k} \left(2\  \frac{ \sum_{j=1}^{m_k} \frac{1}{\lambda_j}}{ \sum_{j=1}^{m_k} \frac{1}{\lambda_j}}\right)^{1/p} \\[.1in]
  &<\ +\infty,
\end{align*}
we observe that $g \in \Lambda BV^{(p)}$.\\

If $N_k=m_k$, then
\begin{equation}
\label{e11}
\frac{   (2N_k-1) }{n_k} \cdot |\Phi_{k}|^{\frac{q}{p}} \geq \frac{1}{(\tfrac{n_k}{m_k}) (\sum_{i=1}^{m_k} 1/\lambda_{i})^{\tfrac{q}{p}}}\\
\end{equation}
and if $N_k=s_k$ then
\begin{equation}
\label{e12}
\frac{   (2N_k-1) }{n_k} \cdot |\Phi_{k}|^{\frac{q}{p}} \overset{\text{(\ref{e7}),(\ref{e10})}}{\geq}\ \ 2^{-k-1} \cdot \frac{m_k}{n_k(\sum_{i=1}^{m_k} 1/\lambda_{i})^{\tfrac{q}{p}}}.
\end{equation}
Since $ |g(x+\frac1{n_k})-g(x)|= 2^{-k}\Phi_{k}^{1/p}$ for $\:x\in[\tfrac{1}{2^k},\,\tfrac1{2^k}+\frac{2N_k-1}{n_k}]$, we get
\begin{align*}
\omega_q(\tfrac{1}{n_k},\,g)^{q}&=\ \ \sup_{0< \gamma\le\tfrac{1}{n_k}}\,\int_0^1|g(x+\gamma)-g(x)|^q\,dx\\
&\geq \ \int_0^1|g(x+ \frac{1}{n_k})-g(x)|^q\,dx\\[.1in]
&\geq  \int_{\frac{1}{2^k}}^{\tfrac1{2^k}+\frac{2N_k-1}{n_k}}|g(x+\frac1{n_k})-g(x)|^q\,dx\\
&= \frac{2N_k-1}{n_k} \cdot  2^{-kq}|\Phi_{k}|^{\frac{q}{p}} \\
&\overset{\text{(\ref{e11}),(\ref{e12})}}{\geq}\  \frac1{2^{k+qk+1}} \cdot \frac{m_k}{n_k(\sum_{i=1}^{m_k} 1/\lambda_{i})^{\tfrac{q}{p}}},
\end{align*}
and finally
 \begin{align*}
\frac{\omega_q(\tfrac1{n_k},\,g)}{\omega(\tfrac1{n_k})}\ &\ge\ ( \frac1{2^{k+qk+1}})^{\tfrac1q} \cdot \frac{1}{\omega(\tfrac1{n_k})} \cdot  \left( \frac{1}{(\tfrac{n_k}{m_k})^{\tfrac{1}{q}} (\sum_{i=1}^{m_k} 1/\lambda_{i})^{\tfrac{1}{p}}} \right) \\
&\overset{\text{(\ref{e6})}}{\geq}\  2^k \xrightarrow[k\to\infty]{}
\ +\infty,
\end{align*}
which shows that $g\notin H_{\omega}^{q}$.
\end{proof}
Condition (\ref{e2}) simplifies when $p\geq q$. Thus we have the following Corollary.
\begin{cor}
   \label{t2}
   For $p,q \in [1 , \infty) (p\geq q)$, the inclusion $\Lambda BV^{(p)}\subset H_{q}^{\omega}$ holds if and only if
   \begin{equation}
    \label{e40}
   \limsup_{n\rightarrow\infty}  \left \{ \frac{1}{\omega (1/n) (\sum_{i=1}^{n} \frac{1}{\lambda_i})^{\frac{1}{p}}} \right \}< +\infty.
   \end{equation}
   \end{cor}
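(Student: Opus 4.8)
The plan is to deduce the corollary directly from Theorem~\ref{t2} by showing that, under the extra hypothesis $p\ge q$, the inner maximum appearing in condition \eqref{e2} is always attained at $k=n$, so that \eqref{e2} collapses to \eqref{e40}. Once this monotonicity is established there is nothing left to prove.

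Set $a_k:=\sum_{i=1}^k 1/\lambda_i$ and $b_k:=k^{1/q}/a_k^{1/p}$. Then the quantity controlled by \eqref{e2} is $\frac{1}{\omega(1/n)\,n^{1/q}}\max_{1\le k\le n}b_k$, while the quantity in \eqref{e40} is $\frac{1}{\omega(1/n)\,a_n^{1/p}}=\frac{b_n}{\omega(1/n)\,n^{1/q}}$. Since $b_n$ is one of the terms entering the maximum, \eqref{e2} trivially implies \eqref{e40}; the substance of the corollary is the converse. For this it suffices to show that the sequence $(b_k)$ is nondecreasing, because then $\max_{1\le k\le n}b_k=b_n$ and conditions \eqref{e2} and \eqref{e40} become literally the same statement, so the claim follows from Theorem~\ref{t2}.

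To prove that $(b_k)$ is nondecreasing, I would first use that $\Lambda=(\lambda_i)$ is nondecreasing to get $a_k=\sum_{i=1}^k 1/\lambda_i\ge k/\lambda_k\ge k/\lambda_{k+1}$, hence
$$\frac{a_{k+1}}{a_k}\ =\ 1+\frac{1/\lambda_{k+1}}{a_k}\ \le\ 1+\frac1k\ =\ \frac{k+1}{k}.$$
Since $p\ge q$ gives $1/p\le 1/q$ and $a_{k+1}/a_k\ge 1$, raising to the $1/p$-th power yields $(a_{k+1}/a_k)^{1/p}\le((k+1)/k)^{1/p}\le((k+1)/k)^{1/q}$, i.e. $(a_k/a_{k+1})^{1/p}\ge(k/(k+1))^{1/q}$. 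Therefore
$$\frac{b_{k+1}}{b_k}\ =\ \Bigl(\frac{k+1}{k}\Bigr)^{1/q}\Bigl(\frac{a_k}{a_{k+1}}\Bigr)^{1/p}\ \ge\ 1,$$
which is the desired monotonicity.

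The only genuine obstacle is the elementary estimate $a_{k+1}/a_k\le(k+1)/k$, which rests entirely on the comparison $a_k\ge k/\lambda_k$ coming from monotonicity of $\Lambda$; everything else is a direct substitution into Theorem~\ref{t2}. It is worth noting that the argument really uses $p\ge q$: without it, $(b_k)$ need not be monotone and the maximum over $k\le n$ cannot in general be replaced by its last term, so \eqref{e2} does not simplify to \eqref{e40}.
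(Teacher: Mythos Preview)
Your argument is correct. The monotonicity estimate $a_{k+1}/a_k\le(k+1)/k$, derived from $a_k\ge k/\lambda_{k+1}$, is exactly what is needed, and combining it with $1/p\le 1/q$ gives $b_{k+1}\ge b_k$ as you claim; hence $\max_{1\le k\le n}b_k=b_n$ and condition~\eqref{e2} reduces to~\eqref{e40}.

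The paper does not spell out a proof of the corollary at all; it merely remarks that condition~\eqref{e2} ``simplifies when $p\ge q$''. The intended justification is presumably Lemma~\ref{l2}: replacing the exponent $q$ there by $q/p\le 1$ yields precisely
\[
\max_{1\le k\le n}\frac{k}{\bigl(\sum_{j=1}^k 1/\lambda_j\bigr)^{q/p}}=\frac{n}{\bigl(\sum_{j=1}^n 1/\lambda_j\bigr)^{q/p}},
\]
which after taking $q$-th roots is the statement $\max_{k\le n}b_k=b_n$. Your proof is a direct, self-contained verification of this identity via the ratio $b_{k+1}/b_k$, rather than via the optimization viewpoint of Lemma~\ref{l2}; the two routes are equivalent in content, but yours has the advantage of also covering the boundary case $p=q$ (exponent equal to~$1$) without a separate remark, and of not relying on the somewhat terse proof of Lemma~\ref{l2}.
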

   $$
   $$
\section{ACKNOWLEDGMENTS}
The author is so grateful to Professor Hjalmar Rosengren for valuable comments, helpful discussions and for reviewing earlier drafts very carefully. The author also thanks Peter Hegarty for pointing out a  mistake in an earlier version.

\end{document}